\newtheorem{thm}{Theorem}[section]
\newtheorem{lem}[thm]{Lemma}
\newtheorem{prop}[thm]{Proposition}
\theoremstyle{definition}
\theoremstyle{remark}
\numberwithin{equation}{section}
\newcommand{\set}[1]{\left\{#1\right\}}
\newcommand{\abs}[1]{\left\vert#1\right\vert}
\newcommand{\Q}{\mathbb Q}
\newcommand{\Z}{\mathbb Z}
\newcommand{\Norm}{\mathbf N}
\newcommand{\OK}{\mathcal O_K}
\newcommand{\C}{\mathbb C}
\newcommand{\R}{\mathbb R}
\DeclareMathOperator{\diam}{diam}
\renewcommand{\Re}{\mathrm{Re}}
\title[Discrete Quartic Integers]{Discrete Subsets of Totally Imaginary Quartic Algebraic Integers in the Complex Plane}
\author{Wenhan Wang}
\email{wangwh@math.washington.edu}
\address{{Department of Mathematics, Box 354350} \\
{University of Washington, Seattle, WA 98195-4350}}%
\date{\today}
\begin{document}
\begin{abstract}
Algebraic integers in totally imaginary quartic number fields are not discrete in the complex plane under a fixed embedding, which makes it impossible to visualize all integers in the plane, unlike the quadratic imaginary algebraic integers. In this note we consider a naturally occurring discrete subset of the algebraic integers with similar properties as lattices. For the fifth cyclotomic field, we investigate those integers with absolute values under a fixed embedding in a given bound. We show that such integers form a discrete set in the complex plane. It is observed that this subset has quasi-periodic appearance. In particular, we also show that the distance between a fixed point to the most adjacent point in this subset takes only two possible values.
\end{abstract}
\maketitle

\section{Introduction}

Let $K$ be a totally imaginary quartic number field, and denote by $\OK$ its ring of integers. $K$ has a unique maximal real subfield $K_0 = K\cap \R$. Let $\sigma$ denote the non-trivial automorphism of $K_0$, then $\sigma$ can be extended to two embeddings of $K\hookrightarrow\C$. By abuse of notation we denote a fixed one of the embeddings by $\sigma$ and the other embedding is then the complex conjugate of $\sigma$. For $z\in \OK\subseteq \C$, we denote $\sigma(z) = z^{\sigma}$ the image under the embedding $\sigma$.

Let $\mathcal B$ be a bounded subset of $\C$ containing $0$ as an interior point. Consider the set
$$\mathcal S_{\mathcal B} := \set{z\in\OK|z^{\sigma}\in\mathcal B}.$$
We claim that the set $\mathcal S_{\mathcal B}$ is a discrete subset of $\C$.

\begin{prop}
Suppose $z_1,z_2\in\mathcal S_{\mathcal B}$, then $\abs{z_1-z_2}\leq \frac{1}{2\diam(\mathcal B)}$.
\end{prop}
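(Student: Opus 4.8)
The plan is to pass to the difference $w := z_1 - z_2 \in \OK$ and to extract an arithmetic constraint from its field norm. The two hypotheses combine cleanly: since $z_1^\sigma, z_2^\sigma \in \mathcal B$ and $\sigma$ is additive, $w^\sigma = z_1^\sigma - z_2^\sigma$ is a difference of two points of $\mathcal B$, so $\abs{w^\sigma} \le \diam(\mathcal B)$; meanwhile the quantity to be estimated, $\abs{z_1 - z_2} = \abs{w}$, is the size of $w$ under the fixed embedding $\iota$ realizing $\OK \subseteq \C$.

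The engine is the factorization of the field norm. Since $K$ is totally imaginary of degree $4$, it has no real embeddings, and its four complex embeddings pair into conjugate couples $\set{\iota, \bar\iota}$ (extending $\mathrm{id}_{K_0}$) and $\set{\sigma, \bar\sigma}$ (extending the nontrivial automorphism of $K_0$). Hence for every $w \in \OK$,
\[
\Norm_{K/\Q}(w) = w\,\bar w\, w^\sigma\, \overline{w^\sigma} = \abs{w}^2\,\abs{w^\sigma}^2 .
\]
If $z_1 \ne z_2$ then $w \ne 0$, so $\Norm_{K/\Q}(w)$ is a nonzero rational integer and $\abs{\Norm_{K/\Q}(w)} \ge 1$. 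Combining with the window bound gives $\abs{w}^2\,\abs{w^\sigma}^2 \ge 1$, whence $\abs{z_1 - z_2} = \abs{w} \ge 1/\abs{w^\sigma} \ge 1/\diam(\mathcal B)$.

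I should note that the norm identity forces the estimate in the reciprocal direction to the one printed: it shows distinct points of $\mathcal S_{\mathcal B}$ are separated by at least $1/\diam(\mathcal B)$, which is exactly the uniform discreteness claimed for $\mathcal S_{\mathcal B}$ and is incompatible with any universal upper bound on pairwise distances (an infinite discrete set cannot be bounded). I therefore aim the plan at the separation bound $\abs{z_1 - z_2} \ge 1/\diam(\mathcal B)$. The one step meriting care is the norm factorization itself — verifying that $K$ being totally imaginary forces all four archimedean places to be complex and to pair as above, and that $\Norm_{K/\Q}(w) \in \Z \setminus \set{0}$ for nonzero $w \in \OK$; this integrality and nonvanishing is precisely what upgrades the soft inclusion $\abs{w^\sigma} \le \diam(\mathcal B)$ into a hard lower bound on $\abs{w}$. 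Everything else is bookkeeping.
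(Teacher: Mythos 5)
Your proof is correct and uses essentially the same argument as the paper: factor $\Norm_{K/\Q}(z_1-z_2)=\abs{z_1-z_2}^2\abs{z_1^{\sigma}-z_2^{\sigma}}^2$ over the two conjugate pairs of complex embeddings and invoke $\Norm_{K/\Q}(z_1-z_2)\geq 1$ for $z_1\neq z_2$; you are also right that the printed $\leq$ is a typo, since the paper's own proof concludes $\abs{z_1-z_2}\geq\frac{1}{2\diam(\mathcal B)}$. The only substantive difference is that you bound $\abs{z_1^{\sigma}-z_2^{\sigma}}$ by $\diam(\mathcal B)$ directly (both points lie in $\mathcal B$), where the paper uses the weaker bound $2\diam(\mathcal B)$, so you obtain the sharper separation constant $\frac{1}{\diam(\mathcal B)}$.
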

\begin{proof}
Since $z_1,z_2\in\mathcal S_{\mathcal B}$, we have
$$\Norm_{K/\Q}(z_1 - z_2) = \abs{z_1 - z_2}^2\abs{z_1^{\sigma} - z_2^{\sigma}}^2 \leq
4\diam(\mathcal B)^2\cdot\abs{z_1 - z_2}^2.$$
On the other hand, as $z_1$ and $z_2$ are algebraic integers, we have $\Norm_{K/\Q}(z_1 - z_2)\geq 1$. Hence we obtain
$$\abs{z_1 - z_2}\geq \frac{1}{2\diam(\mathcal B)}.$$
\end{proof}

\section{A Discrete Subset of Integers in $\Q(\zeta_5)$}

Consider the fifth cyclotomic field $K = \Q(\zeta_5)$ and its ring of integers $\OK$. Fix the embedding $\sigma$ of $K$ that sends $\zeta_5 =\exp(2\pi i/5)$ to $\zeta_5^2$. Let $\mathcal B$ be the unit circle in $\C$, and consider the discrete subset $\mathcal S = \mathcal S_{\mathcal B}$. We claim that $\mathcal S$ has the five-fold symmetry.

\begin{lem}
If $z\in \mathcal S$, then $\zeta z\in \mathcal S$.
\end{lem}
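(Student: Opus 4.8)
The plan is to verify the two defining conditions for membership in $\mathcal S$ directly for the element $\zeta z$, where $\zeta = \zeta_5$. Since $\mathcal S = \set{w\in\OK \mid w^{\sigma}\in\mathcal B}$ with $\mathcal B$ the (closed) unit disk, I must check (i) that $\zeta z\in\OK$, and (ii) that $(\zeta z)^{\sigma}\in\mathcal B$, i.e. $\abs{(\zeta z)^{\sigma}}\leq 1$.

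For (i), the root of unity $\zeta_5$ satisfies the monic integer polynomial $x^4+x^3+x^2+x+1$, so $\zeta_5\in\OK$; because $\OK$ is a ring and $z\in\OK$ by hypothesis, the product $\zeta z$ again lies in $\OK$. This step is immediate and needs no computation.

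For (ii), the key point is that $\sigma$ is a field embedding and hence multiplicative, so $(\zeta z)^{\sigma}=\zeta^{\sigma}z^{\sigma}$. By the chosen normalization $\zeta^{\sigma}=\zeta_5^2$, which is itself a primitive fifth root of unity and therefore has $\abs{\zeta^{\sigma}}=1$. Consequently $\abs{(\zeta z)^{\sigma}}=\abs{\zeta^{\sigma}}\,\abs{z^{\sigma}}=\abs{z^{\sigma}}\leq 1$, the last inequality coming from $z\in\mathcal S$, i.e. $z^{\sigma}\in\mathcal B$. Thus $(\zeta z)^{\sigma}$ lies in the unit disk and $\zeta z\in\mathcal S$.

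I do not anticipate a genuine obstacle: the statement amounts to the assertion that $\mathcal S$ is stable under the rotation $z\mapsto\zeta_5 z$, which works precisely because $\mathcal B$ is rotationally symmetric about $0$ while the conjugate map $w\mapsto\zeta^{\sigma}w$ likewise preserves $\abs{\cdot}$. The only subtlety worth recording explicitly is that a Galois conjugate of a root of unity is again a root of unity and so still has modulus $1$. Iterating the lemma five times, using $\zeta_5^5=1$, then delivers the advertised five-fold symmetry of $\mathcal S$.
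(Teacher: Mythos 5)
Your proof is correct and takes essentially the same route as the paper: multiplication by a root of unity preserves modulus, applied to the image under the embedding $\sigma$. If anything, your version is the more careful one --- the paper's one-line proof speaks of $\abs{\zeta z}=\abs{z}$, whereas membership in $\mathcal S$ concerns $\abs{(\zeta z)^{\sigma}}$, and your explicit steps $(\zeta z)^{\sigma}=\zeta^{\sigma}z^{\sigma}$ and $\abs{\zeta^{\sigma}}=1$ (a Galois conjugate of a root of unity is again a root of unity) are precisely what make the argument airtight.
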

\begin{proof}
It suffices to observe that $\abs{\zeta} = 1$ and hence $\abs{\zeta z} = \abs{z}$ for $\zeta$ any root of unity.
\end{proof}

Note that $\mathcal S$ is a subset of $\OK$, and $S$ is in fact not a lattice in $\C$. However, $\mathcal S$ shares the following similar property as lattices in $\C$. The following figure depicts a portion of the set $\mathcal S\subseteq\C$ near the origin. The points surrounded by small circles are 0 and the fifth roots of unity.

\includegraphics[width = 5in]{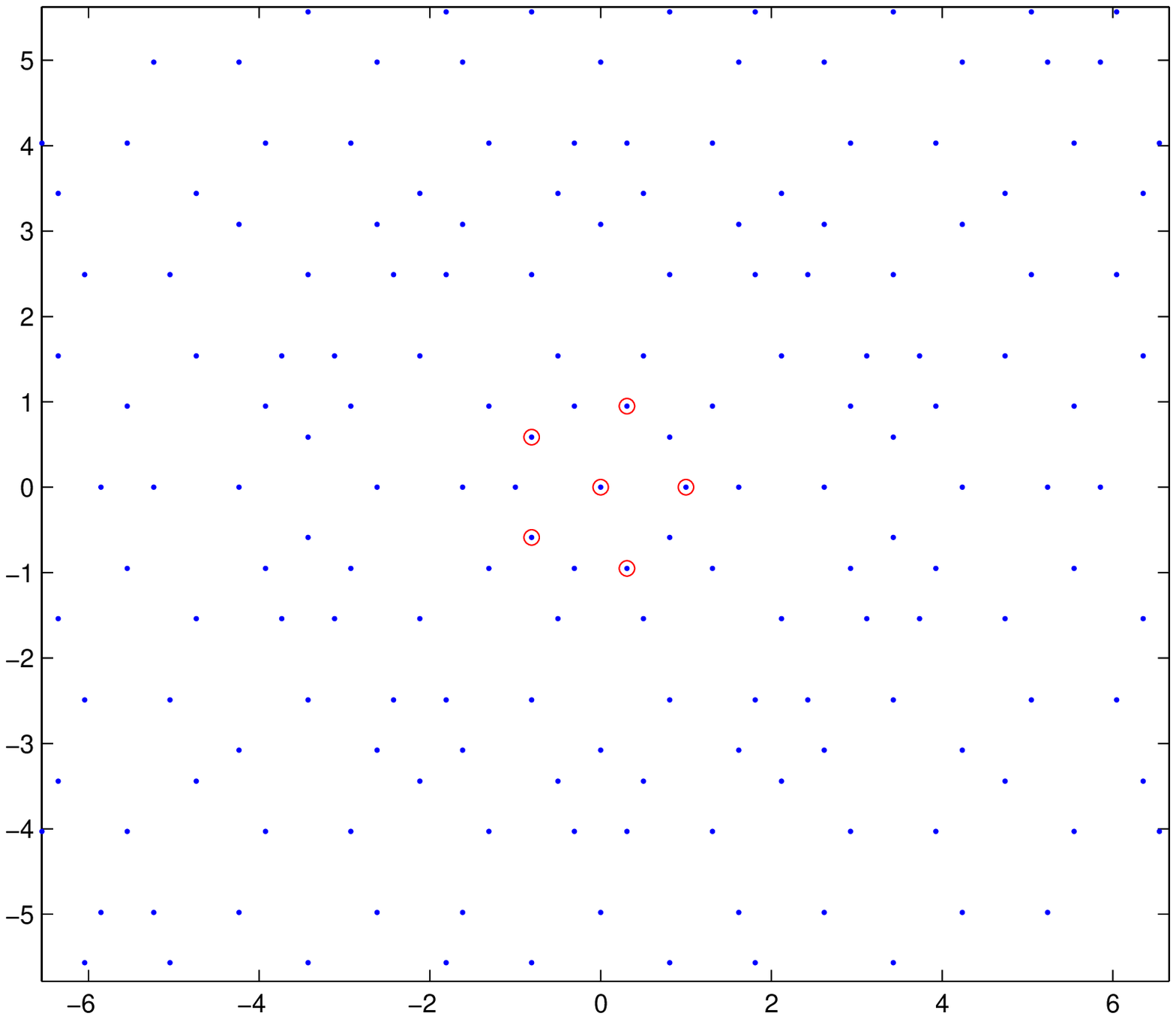}

For a fixed point $z$ in a given lattice $\Lambda\in\C$, the minimum distance from $z$ to another point $z'\in\Lambda$ is always a constant. Namely, if $\Lambda = \Z\omega_1 + \Z\omega_2$, then $\min_{z'\in\Lambda-\set{z}}\abs{z'-z} = \min_{z'\in\Lambda-\set{0}}\abs{z'}$. For the set $S$, we have the following theorem.

\begin{thm}\label{thm:minimum-distance}
$\min_{z'\in\mathcal S-\set{z}}\abs{z' - z} \in\set{\frac{\sqrt{5}-1}{2},1}$.
\end{thm}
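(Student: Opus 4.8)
The plan is to translate the nearest-neighbour distance into a question about which short differences $w = z'-z\in\OK$ can occur, and to control these using the two forces that shape $\mathcal S$: an arithmetic one, that norms of algebraic integers are integers, and a geometric one, that the $\sigma$-image must land in the window $\mathcal B$ (the closed unit disk, so that $\diam(\mathcal B)=2$). Fix $z\in\mathcal S$ and put $p = z^\sigma$. Then $z' = z+w$ lies in $\mathcal S$ precisely when $w\in\OK\setminus\set{0}$ and $p + w^\sigma\in\mathcal B$; call such a $w$ \emph{admissible}. The nearest-neighbour distance is $\min\set{\abs w : w\text{ admissible}}$ (a minimum, since $\mathcal S$ is discrete by the Proposition), and any admissible $w$ satisfies $\abs{w^\sigma}\le 2$. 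I would first bound admissible lengths from below and show the open interval $(\frac{\sqrt5-1}{2},1)$ contains none, then prove an upper bound of $1$ by a covering argument.

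For the lower bound the key arithmetic input is that the smallest value $>1$ taken by $\abs{\Norm_{K/\Q}}$ on $\OK$ is $5$: since $2$ and $3$ have order $4$ in $(\Z/5)^\times$, each lies under a single prime of residue degree $4$, so $\OK$ has no ideal of norm $2$, $3$ or $4$. Hence a non-unit admissible $w$ has $\abs w^2 = \abs{\Norm_{K/\Q}(w)}/\abs{w^\sigma}^2 \ge 5/4$, i.e. $\abs w \ge \sqrt5/2 > 1$. For units I would use $\OK^\times = \langle\zeta_{10}\rangle\times\langle\varphi\rangle$, where $\varphi = \frac{1+\sqrt5}{2}$ is the fundamental unit of $\Q(\sqrt5)$; a unit $w$ has $\abs{w^\sigma} = 1/\abs w$, so $\abs w = \varphi^{-m}$ for some $m\in\Z$, and admissibility $\abs{w^\sigma}=\varphi^{m}\le 2$ forces $m\le 1$, whence $\abs w\ge\varphi^{-1}=\frac{\sqrt5-1}{2}$. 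Combining the two cases, every admissible $w$ has $\abs w\ge\frac{\sqrt5-1}{2}$. Moreover an admissible $w$ with $\abs w<1$ cannot be a non-unit, so it is a unit with $\varphi^{-m}<1$ and $m\le1$, forcing $m=1$ and $\abs w=\frac{\sqrt5-1}{2}$; thus no admissible length lies strictly between $\frac{\sqrt5-1}{2}$ and $1$.

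For the upper bound I would establish a covering lemma: every point of $\mathcal B$ lies within distance $1$ of some tenth root of unity. Writing $p = re^{i\theta}\in\mathcal B$, its nearest tenth root of unity is at angular distance at most $\frac{\pi}{10}$, so the squared distance is at most $r^2 - 2r\cos\frac{\pi}{10} + 1$, and as a function of $r\in[0,1]$ this has maximum $1$ (attained at $r=0$). Since the tenth roots of unity are exactly the $\sigma$-images of the roots of unity $\pm\zeta_5^k\in\OK$, and this set is symmetric under negation, the lemma yields a root of unity $w$ with $p+w^\sigma\in\mathcal B$, i.e. an admissible $w$ with $\abs w = 1$. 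Hence the nearest-neighbour distance is at most $1$. Together with the previous paragraph — it is $\ge\frac{\sqrt5-1}{2}$, is $\le 1$, and avoids the open interval between — this pins it to $\set{\frac{\sqrt5-1}{2},1}$.

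The hard part will be the two structural facts about $\OK$ behind the lower bound: that the least non-unit norm is $5$, which opens the gap isolating the length $\frac{\sqrt5-1}{2}$ from everything above $1$, and that $\varphi$ is genuinely a fundamental unit. The latter is essential: were there a smaller fundamental unit $\varepsilon$ with $\abs\varepsilon = \varphi^{-1/k}$ and $k\ge2$, it would furnish admissible vectors of length strictly between $\frac{\sqrt5-1}{2}$ and $1$ and the theorem would fail. Both facts are classical for $\Q(\zeta_5)$ — the first from the splitting of $2$ and $3$, the second from the unit index being $1$ for prime cyclotomic fields — and I would state them explicitly rather than absorb them into routine computation.
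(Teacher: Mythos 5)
Your proof is correct and follows essentially the same route as the paper: non-unit differences are excluded by the norm-$\geq 5$ bound coming from the inertness of $2$ and $3$, unit differences are powers of $\varphi$ constrained by $\abs{w^{\sigma}}\leq 2$, and the upper bound $1$ comes from the same covering of the unit disk by tenth roots of unity. If anything, your write-up is slightly more careful: you state explicitly the unit-group structure $\mathcal O_K^{\times} = \langle\zeta_{10}\rangle\times\langle\varphi\rangle$ that the paper uses only implicitly, and you avoid the paper's unnecessary appeal to $\OK$ being a PID.
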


To prove the above theorem, we need the following lemmas.

\begin{lem}\label{thm:unit}
Suppose $z_1,z_2\in S$. If $\abs{z_1 - z_2} < \frac{\sqrt{5}}{2}$, then $z_1 - z_2$ is a unit.
\end{lem}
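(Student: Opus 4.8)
The plan is to reduce the statement to a computation of the field norm $\Norm_{K/\Q}(w)$, where $w = z_1 - z_2$, and then to show that this norm must equal $1$. Recall that a nonzero element of $\OK$ is a unit precisely when its norm is $\pm 1$; since the norm here will turn out to be a nonnegative integer, it suffices to prove $\Norm_{K/\Q}(w) = 1$.

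First I would record the norm formula already exploited in the opening Proposition. The four embeddings of $K = \Q(\zeta_5)$ are $\zeta_5 \mapsto \zeta_5^k$ for $k \in \set{1,2,3,4}$, and they fall into the two conjugate pairs $\set{\mathrm{id}, \overline{\mathrm{id}}}$ and $\set{\sigma, \bar\sigma}$, so that
$$\Norm_{K/\Q}(w) = \abs{w}^2\,\abs{w^{\sigma}}^2.$$
The hypothesis $\abs{z_1 - z_2} < \tfrac{\sqrt5}{2}$ gives $\abs{w}^2 < \tfrac54$, while $z_1^{\sigma}, z_2^{\sigma}\in\mathcal B$ gives $\abs{w^{\sigma}} \le \abs{z_1^{\sigma}} + \abs{z_2^{\sigma}} \le 2$, hence $\abs{w^{\sigma}}^2 \le 4$. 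Multiplying the two bounds, and using the strict inequality in the first factor, yields $\Norm_{K/\Q}(w) < 5$. Since $w\in\OK$ its norm is a rational integer, and if $w\ne 0$ it is positive; thus $\Norm_{K/\Q}(w)\in\set{1,2,3,4}$.

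It remains to exclude the values $2$, $3$, and $4$, and this is the part I expect to be the crux. The cleanest route is to observe that none of $2, 3, 4$ arises as the norm of any ideal of $\OK$, so \emph{a fortiori} not as the norm of an element. This follows from the splitting behavior of small primes in $\Q(\zeta_5)$: a rational prime $p\ne 5$ factors into $\phi(5)/f = 4/f$ primes of residue degree $f$, where $f$ is the multiplicative order of $p$ modulo $5$. Since $2$ and $3$ each have order $4$ modulo $5$, both are inert, so the smallest ideal norms supported at $2$ or at $3$ are $\Norm((2)) = 2^4 = 16$ and $\Norm((3)) = 3^4 = 81$. In particular no ideal has norm $2$, $3$, or $4$. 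Therefore $\Norm_{K/\Q}(w) = 1$, and $w = z_1 - z_2$ is a unit, as claimed.
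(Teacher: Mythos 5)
Your proof is correct and takes essentially the same approach as the paper's: both arguments bound $\Norm_{K/\Q}(z_1-z_2)=\abs{z_1-z_2}^2\abs{z_1^{\sigma}-z_2^{\sigma}}^2$ below $5$ using $\abs{z_1^{\sigma}-z_2^{\sigma}}\leq 2$, and then use the inertness of $2$ and $3$ in $\Q(\zeta_5)$ to rule out norms $2$, $3$, $4$ (the paper phrases this as a contradiction, you argue directly). Your remark that ideal norms alone suffice is a slight tidying: it makes the paper's appeal to $\OK$ being a PID unnecessary.
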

\begin{proof}
Suppose to the contrary that $z_1 - z_2$ is not a unit, then $\Norm(z_1 - z_2) > 1$. Since $\OK$ is a PID, and both $2$ and $3$ are inert primes in $K$, we deduce that $\Norm(z_1 - z_2) \geq 5$. Hence $\abs{z_1 - z_2}\cdot\abs{z_1^{\sigma} - z_2^{\sigma}}\geq \sqrt{5}$. As $\abs{z_1^{\sigma} - z_2^{\sigma}}\leq 2$, we have $\abs{z_1 - z_2}\geq\frac{\sqrt{5}}{2}$. This contradicts with our assumption.
\end{proof}

\begin{prop}\label{thm:lower-bound}
Suppose $z_1,z_2\in S$. Then $\abs{z_1 - z_2}\geq\frac{\sqrt{5}-1}{2}$.
\end{prop}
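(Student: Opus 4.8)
The plan is to argue by reduction to the unit group of $\OK = \Z[\zeta_5]$. We may assume $z_1\neq z_2$ and, moreover, that $\abs{z_1-z_2} < \frac{\sqrt5}{2}$; otherwise $\abs{z_1-z_2}\geq\frac{\sqrt5}{2} > \frac{\sqrt5-1}{2}$ and there is nothing to prove. Under this assumption Lemma~\ref{thm:unit} applies and tells us that $u := z_1 - z_2$ is a unit of $\OK$. So the entire problem reduces to bounding $\abs{u}$ from below over all units that can arise as such a difference, i.e. to understanding how small a unit can be in the fixed embedding subject to the constraint coming from $\sigma$.

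The key is to exploit the small rank of the unit group. Since $K=\Q(\zeta_5)$ is a CM field with $r_1=0,\ r_2=2$, the unit rank is $1$, and the unit index is $1$, so $\OK^{\times} = \mu_K\times\langle\phi\rangle$, where $\mu_K$ is the group of $10$th roots of unity and $\phi = \frac{1+\sqrt5}{2} = -(\zeta_5^2+\zeta_5^3)$ is the fundamental unit (the golden ratio, which lies in the real subfield $K_0=\Q(\sqrt5)$). I can therefore write $u = \zeta\,\phi^n$ for some $\zeta\in\mu_K$ and some $n\in\Z$. The computation I would carry out first is the effect of the two embeddings on $\phi$: in the fixed embedding $\phi$ maps to the positive real $\frac{1+\sqrt5}{2}$, while a short calculation using $\sigma(\zeta_5)=\zeta_5^2$ gives $\phi^{\sigma} = -(\zeta_5+\zeta_5^4) = -\frac{\sqrt5-1}{2}$, so that $\abs{\phi}=\phi$ and $\abs{\phi^{\sigma}}=\phi^{-1}=\frac{\sqrt5-1}{2}$. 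Since roots of unity have absolute value $1$ under every embedding, this yields $\abs{u}=\phi^{n}$ and $\abs{u^{\sigma}}=\phi^{-n}$, consistent with $\Norm(u)=\abs{u}^2\abs{u^{\sigma}}^2=1$.

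It remains to pin down which exponents $n$ are admissible. Because $z_1,z_2\in S$ both lie in the unit disk under $\sigma$, the triangle inequality gives $\abs{u^{\sigma}} = \abs{z_1^{\sigma}-z_2^{\sigma}}\leq 2$, hence $\phi^{-n}\leq 2$. As $\phi^2 = \frac{3+\sqrt5}{2} > 2$, this inequality forces $-n\leq 1$, i.e. $n\geq -1$. Therefore $\abs{u}=\phi^{n}\geq\phi^{-1}=\frac{\sqrt5-1}{2}$, which is exactly the claimed bound, and the extremal case $n=-1$ shows it is sharp.

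The step I expect to require the most care is the structural input about the units: justifying that $\OK^{\times}=\mu_K\times\langle\phi\rangle$ with $\phi$ the golden ratio (i.e. that the unit index is $1$ and that $\phi$ generates the units of $K_0$), and then tracking absolute values through $\sigma$ so that the admissible range $n\geq -1$ comes out sharp. The inequality $\phi^2>2$ is what makes the bound tight: it is precisely the obstruction to pushing $n$ down to $-2$ while keeping $\abs{u^{\sigma}}\leq 2$, and it is why $\frac{\sqrt5-1}{2}$, rather than the weaker constant $\frac12$ that a naive application of $\abs{u}\,\abs{u^{\sigma}}=1$ would produce, is the correct answer.
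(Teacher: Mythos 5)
Your proof is correct and takes essentially the same route as the paper's: apply Lemma~\ref{thm:unit} to conclude that $z_1-z_2$ is a unit, observe that the absolute value of any unit under a fixed embedding is a power of $\phi=\frac{1+\sqrt5}{2}$, and use $\abs{z_1^{\sigma}-z_2^{\sigma}}\leq 2$ together with $\phi^2=\frac{3+\sqrt5}{2}>2$ to exclude all exponents below $-1$. The only differences are presentational: you argue directly rather than by contradiction, and you make explicit the unit-group structure $\OK^{\times}=\mu_K\times\langle\phi\rangle$ that the paper invokes implicitly.
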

\begin{proof}
With no loss of generality we assume $\abs{z_1 - z_2}<\frac{\sqrt{5}-1}{2}$. Then from the above lemma we know that $z_1 - z_2$ is a unit in $K$. Hence $\abs{z_1 - z_2} = (\frac{\sqrt{5}-1}{2})^j$ for some $j\geq 0$. We need to show that $j\leq 1$.

Suppose to the contrary that $j\geq 2$, then $\abs{z_1^{\sigma} - z_2^{\sigma}}\geq\frac{3+\sqrt{5}}{2}$. This gives rise to a contradiction as $\abs{z_1^{\sigma} - z_2^{\sigma}} \leq 2$.
\end{proof}

\begin{prop}\label{thm:upper-bound}
$\min_{z'\in\mathcal S-\set{z}}\abs{z - z'}\leq 1$.
\end{prop}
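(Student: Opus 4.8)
The plan is to produce, for each $z\in\mathcal S$, an explicit neighbour at distance exactly $1$, of the form $z' = z + \zeta_5^k$ for a suitably chosen $k\in\set{0,1,2,3,4}$. Since $\abs{\zeta_5^k}=1$, any such $z'$ automatically has $\abs{z-z'}=1$ and $z'\neq z$, so the whole problem is to arrange that $z'$ genuinely lands in $\mathcal S$. If one such $k$ works, then $\min_{z'\in\mathcal S-\set{z}}\abs{z-z'}\leq 1$ and the proposition follows.

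First I would translate the membership condition $z'\in\mathcal S$ into a statement about the $\sigma$-embedding. Recalling that $\mathcal B$ is the closed unit disc and that $\sigma(\zeta_5)=\zeta_5^2$, the requirement $\abs{(z')^{\sigma}}\leq 1$ becomes $\abs{z^{\sigma}+\zeta_5^{2k}}\leq 1$. Writing $w=z^{\sigma}$, which by hypothesis satisfies $\abs{w}\leq 1$, the task reduces to a purely geometric claim: for every $w$ in the closed unit disc there is some $k$ with $\abs{w+\zeta_5^{2k}}\leq 1$. As $k$ runs over $\set{0,\dots,4}$ the exponent $2k$ runs over all residues modulo $5$, so the points $-\zeta_5^{2k}$ are exactly the five points $-\zeta_5^m$, equally spaced on the unit circle with angular gaps of $72^\circ$. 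Thus the claim is equivalent to the assertion that the five closed unit discs centred at these points cover the central closed unit disc.

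To verify the covering I would, for a given $w=re^{i\theta}$ with $r\leq 1$, pick the centre $c=-\zeta_5^{2k}$ whose argument lies within $36^\circ$ of $\theta$; such a centre exists because consecutive centres are $72^\circ$ apart. Letting $\alpha\leq 36^\circ$ denote this angular difference, a one-line computation gives $\abs{w-c}^2 = r^2-2r\cos\alpha+1$, which is $\leq 1$ exactly when $r\leq 2\cos\alpha$. Since $\alpha\leq 36^\circ$ forces $\cos\alpha\geq\cos 36^\circ>\tfrac12$, we get $2\cos\alpha>1\geq r$, so indeed $\abs{w-c}\leq 1$. The corresponding $k$ then yields $z'=z+\zeta_5^k\in\mathcal S$.

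I expect the genuine content to lie entirely in the covering estimate, with the remaining points being bookkeeping: one must confirm that $k\mapsto 2k$ is a bijection modulo $5$ so that every centre is realised, and one must handle the boundary carefully. The extremal case is the origin $w=0$, which lies at distance exactly $1$ from all five centres and is therefore covered only because $\mathcal B$ is \emph{closed}; this is the one place where the closedness of $\mathcal B$ is used. No appeal to the unit structure of $\OK$ is needed for this half of the argument, in contrast to the lower bound.
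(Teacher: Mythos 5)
Your proof is correct and takes essentially the same route as the paper's: translate membership of $z+\zeta$ in $\mathcal S$ into a covering of the closed unit disc by unit discs centred at roots of unity, pick the centre whose argument is within half the angular gap of $z^{\sigma}$, and close with the law-of-cosines estimate. The only cosmetic difference is that you use the five fifth roots of unity (gap $72^{\circ}$, so you need $\cos 36^{\circ} > \tfrac12$) where the paper uses the ten tenth roots $e^{\pi i j/5}$ (gap $36^{\circ}$); both estimates go through, and your bookkeeping of the $\sigma$-images via the bijection $k \mapsto 2k \bmod 5$ is, if anything, more careful than the paper's.
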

\begin{proof}
It suffices to show that at least one of $z + e^{\pi i j/5}$ is in $S$ for $0\leq j\leq 9$. Note that by definition of $S$ this is equivalent to $\abs{z^{\sigma} + \sigma(e^{\pi ij/5})}\leq 1$. Note that the inequality holds for $z = 0$ and all $0\leq j\leq 9$. Now assume that $z\neq 0$. Then we may choose $j'$ such that the argument of $z^{\sigma}$ and $e^{\pi ij'/5}$ differs by no greater than $\pi/10$. Thus
\begin{eqnarray*}
\abs{z^{\sigma} - \sigma(e^{\pi ij'/5})}^2 &=& \abs{z^{\sigma}}^2 + 1 - 2\Re z^{\sigma}e^{-\pi ij'/5} \\
&\leq& \abs{z^{\sigma}}^2 + 1 - 2\abs{z^{\sigma}}\cos(\pi/10) \\
&=& \left(\abs{z^{\sigma}} - \cos(\pi/10)\right)^2 + \sin^2(\pi/10) \\
&\leq& \cos^2(\pi/10) + \sin^2(\pi/10) = 1.
\end{eqnarray*}
\end{proof}

\begin{proof}[Proof of Theorem \ref{thm:minimum-distance}]
From Proposition \ref{thm:upper-bound} we know that the minimum distance is always less than or equal to 1. From Proposition \ref{thm:lower-bound} it follows that the minimum distance is greater than $\frac{\sqrt{5}-1}{2}$. By Lemma \ref{thm:unit}, the minimum distance is a real unit of $K$ in the interval $[\frac{\sqrt{5}-1}{2},1]$. Thus it takes values only in $\set{\frac{\sqrt{5}-1}{2},1}$.
\end{proof}

\

\end{document}